\newtheorem{theorem}{Theorem}[section]
\newtheorem*{theorem*}{Theorem}
\newtheorem{corollary}[theorem]{Corollary}
\newtheorem{definition}[theorem]{Definition}
\newtheorem{example}[theorem]{Example}
\newtheorem{claim}{Claim}[theorem]
\newtheorem{remark}[theorem]{Remark}
\newcommand{\R}{\mathbb{R}}
\newcommand{\C}{\mathbb{C}}
\newcommand{\N}{\mathbb{N}}
\begin{document}

\title[Differential invariance of the multiplicity]
{Differential invariance of the  multiplicity of real and complex analytic sets}

\author[J. Edson Sampaio]{Jos\'e Edson Sampaio}

\address{Jos\'e Edson Sampaio:  Departamento de Matem\'atica, Universidade Federal do Cear\'a,
	      Rua Campus do Pici, s/n, Bloco 914, Pici, 60440-900, 
	      Fortaleza-CE, Brazil.  
}
\email{edsonsampaio@mat.ufc.br}

\thanks{The author was partially supported by CNPq-Brazil grant 303811/2018-8.
}

\keywords{Zariski's multiplicity conjecture, Analytic sets, Multiplicity}
\subjclass[2010]{14B05; 14Pxx; 32S50}

\begin{abstract}
This paper is devoted to proving the differential invariance of the multiplicity of real and complex analytic sets. In particular, we prove the real version of Gau-Lipman's Theorem, i.e., it is proved that the multiplicity mod 2 of real analytic sets is a differential invariant. We prove also a generalization of Gau-Lipman's Theorem.
\end{abstract}

\maketitle
\section{Introduction}

In 1983, Y.-N. Gau and J. Lipman in the paper \cite{Gau-Lipman:1983} proved the following result about the differential invariance of the multiplicity of complex analytic sets (see \cite{Chirka:1989} for a definition of multiplicity of complex analytic sets):
\begin{theorem}[Gau-Lipman's Theorem]\label{complex_gau-lipman_thm}
Let $X ,Y\subset \C^n$ be two complex analytic sets. If there exists a homeomorphism $\varphi\colon (\C^n,X,0)\to (\C^n,Y,0)$ such that $\varphi$ and $\varphi^{-1}$ have a derivative at the origin (as mappings from $(\R^{2n},0)$ to $(\R^{2n},0)$), then $m(X,0)= m(Y,0)$.
\end{theorem}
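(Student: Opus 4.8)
The plan is to reduce the statement to a degree equality for the tangent cones, and then to extract that equality from a topological invariant that survives an arbitrary real-linear change of coordinates.

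First I would pass to tangent cones. Writing $d=\dim_{\mathbb{C}}X$, recall that the tangent cone $C(X,0)$ is a complex algebraic cone of pure complex dimension $d$ and that $m(X,0)$ equals the degree of its projectivization $\mathbb{P}(C(X,0))\subset\mathbb{P}^{n-1}$, counted as a cycle (equivalently $m(X,0)=\Theta(X,0)=\Theta(C(X,0),0)$ by Thie's theorem, the density being taken with the usual normalization). Thus it suffices to compare $\deg C(X,0)$ with $\deg C(Y,0)$.

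Next I would show that the derivative transports one tangent cone onto the other. Put $L=d\varphi_0$. Since $\varphi^{-1}\circ\varphi=\mathrm{id}$ and both maps are differentiable at $0$ and fix $0$, the chain rule gives $d(\varphi^{-1})_0\circ L=\mathrm{id}$, so $L$ is a real-linear isomorphism of $\mathbb{R}^{2n}$ with $L^{-1}=d(\varphi^{-1})_0$. If $v\in C(X,0)$, write $v=\lim_j t_j^{-1}x_j$ with $x_j\in X$, $x_j\to 0$ and $t_j\to 0^+$; then $\varphi(x_j)\in Y$ and, by differentiability, $\varphi(x_j)=L(x_j)+o(|x_j|)$, so $t_j^{-1}\varphi(x_j)\to L(v)$ and hence $L(v)\in C(Y,0)$. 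Applying the same argument to $\varphi^{-1}$ gives the reverse inclusion, so $L$ restricts to a real-linear isomorphism $C(X,0)\to C(Y,0)$ of the underlying sets.

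The heart of the matter, and the step I expect to be the main obstacle, is therefore the purely conical statement: a real-linear isomorphism $L\colon\mathbb{R}^{2n}\to\mathbb{R}^{2n}$ carrying a complex algebraic cone $C_1$ onto a complex algebraic cone $C_2$ (both of complex dimension $d$) forces $\deg C_1=\deg C_2$. Here metric invariants are of no use, since $L$ distorts $2d$-volume, and hence the density, by a direction-dependent factor; the content is that the degree is secretly topological. I would compute $\deg C_i$ as a linking number in the sphere: for a generic complex $(n-d)$-plane $P$ one has $\deg C_i=\mathrm{lk}\big(C_i\cap S^{2n-1},\,P\cap S^{2n-1}\big)$, the dimensions $2d-1$ and $2(n-d)-1$ being complementary in $S^{2n-1}$, and the count being \emph{unsigned} because complex intersections are positively oriented. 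After radially normalizing $L$ to a homeomorphism of $S^{2n-1}$, linking numbers are preserved, whence $\deg C_1=\mathrm{lk}\big(C_2\cap S^{2n-1},\,L(P)\cap S^{2n-1}\big)$. The difficulty is that $L(P)$ is only a real plane, so this last linking number is an a priori \emph{signed} intersection count of $C_2$ against $L(P)$, which is bounded above by the geometric count and so, one expects, by $\deg C_2$; this would give $\deg C_1\le\deg C_2$, and running the symmetric argument with $L^{-1}$ against a complex plane generic for $C_2$ would give $\deg C_2\le\deg C_1$. Reconciling the signed linking count with the geometric degree across the orientation-destroying map $L$ is exactly where the care is required, and is the crux of the whole proof.
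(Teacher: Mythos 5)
Your first two steps are sound and match the opening of the paper's own argument: $L=D\varphi_0$ is a real-linear isomorphism and carries $C(X,0)$ onto $C(Y,0)$ (indeed it matches up irreducible components bijectively). But the statement you yourself identify as the crux --- that a real-linear isomorphism between complex cones preserves degree --- is left unproved, and the route you sketch for it breaks down. The chain ``signed linking count $\le$ unsigned geometric count $\le \deg C_2$'' fails at the second inequality: for a real plane of complementary dimension meeting a complex cone only at $0$, the number of points in which a generic translate of that plane meets the cone is \emph{not} bounded by the degree. Concretely, take $C=\{z_1^2+z_2^2+z_3^2=0\}\subset\C^3$ (degree $2$) and the real $2$-plane $Q_\epsilon$ spanned by $(1+i\epsilon,0,0)$ and $(0,1-i\epsilon,0)$ with $0<\epsilon<1$; then $Q_\epsilon\cap C=\{0\}$, but the translate of $Q_\epsilon$ by $(0,0,i)$, parametrized by $(s,t)\mapsto\bigl(s(1+i\epsilon),\,t(1-i\epsilon),\,i\bigr)$, meets $C$ exactly where $s^2=t^2$ and $(s^2+t^2)(1-\epsilon^2)=1$, i.e.\ in four transverse points, whose signs are $+,+,-,-$. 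So the unsigned count is $4>2$ and the signed count is $0\neq 2$: both the inequality and the equality you need can fail for real planes. Since nothing in your argument exploits what is special about $L(P)$ beyond its being a real plane transverse to $C_2$, this is an essential gap, not a detail to be checked. There is also a second, smaller gap: passing to tangent cones loses the multiplicities with which the components occur in the tangent cone cycle (the integers $k_X(X_j)$ in the paper's notation); a linking number computed with the reduced fundamental class of $C_i\cap S^{2n-1}$ sees only the reduced degree, so even a correct conical statement would not yield $m(X,0)=m(Y,0)$ without also proving $k_X(X_j)=k_Y(L(X_j))$.

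For comparison, the paper deduces this theorem from its Theorem \ref{generalization_gau-lipman_thm}, and it disposes of the conical crux by an algebraic device rather than topology. Each component $X_j$ of $C(X,0)$ is regarded as a real algebraic subset of $\R^{2N}$; the complexification $\phi_{\C}$ of $L$ is then a $\C$-linear isomorphism taking $X_{j\C}$ onto $Y_{j\C}$, so $m(X_{j\C},0)=m(Y_{j\C},0)$, while Ephraim's product structure theorem gives $X_{j\C}$ analytically equivalent to $X_j\times c_N(X_j)$, whence $m(X_{j\C},0)=m(X_j,0)^2$ and likewise for $Y_j$; therefore $m(X_j,0)^2=m(Y_j,0)^2$, i.e.\ $m(X_j,0)=m(Y_j,0)$. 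The equality of the cycle multiplicities $k_X(X_j)=k_Y(Y_j)$ is obtained by lifting $\varphi$ to a homeomorphism of strict transforms under the spherical blow-up, and the conclusion follows from $m(X,0)=\sum_j k_X(X_j)\,m(X_j,0)$. If you want to salvage your approach, the missing ingredient is precisely a replacement for the false positivity statement about real planes; the complexification trick is one way to sidestep it entirely.
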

This result was a generalization of the result proved separately by R. Ephraim in \cite{Ephraim:1976} and D. Trotman in \cite{Trotman:1977} (see also \cite{Trotman:1998}). They showed that the following question has a positive answer when the homeomorphism $\varphi$ is a $C^1$ diffeomorphism.

\begin{enumerate}[leftmargin=*]\label{zariski}
\item[]{\bf Question A.} Let $f,g\colon (\C^n,0)\to (\C,0)$ be two complex analytic functions. If there is a homeomorphism $\varphi\colon (\C^n,V(f),0)\to (\C^n,V(g),0)$, then is it true that $m(V(f),0)=m(V(g),0)$?
\end{enumerate} 
This question was asked by O. Zariski in 1971 (see \cite{Zariski:1971}) and  in its stated version is known as Zariski's multiplicity conjecture. It is still an open problem. 

Here, we are interested in the case of real analytic sets, however, the problem has a negative answer in this case, as we can see in the following example.
\begin{example}\label{ex_zariski}
Let $X=\{(x,y)\in\R^2;\, y=0\}$, $Y=\{(x,y)\in\R^2;\, y^3=x^2\}$ and $\varphi: \R^2\to \R^2$ given by $\varphi(x,y)=(x,x^{\frac{2}{3}}-y)$. Then, $\varphi$ is a homeomorphism such that $\varphi(X)=Y$, but $m(X)\equiv 1\, {\rm mod\,}2$ and $m(Y)\equiv 0\, {\rm mod\,} 2$.
\end{example}

However, some authors approached Question A in the real case. For example, J.-J. Risler in \cite{Risler:2001} proved that multiplicity ${\rm mod\,} 2$ of a real analytic curve is invariant by bi-Lipschitz homeomorphisms. T. Fukui, K. Kurdyka and L. Paunescu in \cite{FukuiKP:2004} proposed the following conjecture
\begin{enumerate}[leftmargin=*]
\item[]{\bf Conjecture F-K-P.} Let $h \colon (\R^n, 0) \to (\R^n,0)$ be the germ of a subanalytic, arc-analytic, bi-Lipschitz homeomorphism, and let $X, Y\subset\R^n$ be two irreducible analytic germs. Suppose that $Y = h(X)$, then $m(X) = m(Y)$.
\end{enumerate} 
and proved that multiplicity of a real analytic curve is invariant by arc-analytic bi-Lipschitz homeomorphisms. G. Valette in \cite{Valette:2010} proved that the multiplicity ${\rm mod\,} 2$ of a real analytic hypersurface is invariant by arc-analytic bi-Lipschitz homeomorphisms and the multiplicity ${\rm mod\,} 2$ of a real analytic surface is invariant by subanalytic bi-Lipschitz homeomorphisms and the author in \cite{Sampaio:2020} proved that the multiplicity ${\rm mod\,} 2$ of a real analytic surface is invariant by bi-Lipschitz homeomorphisms.

The main aim of this paper is to prove the real version of Gau-Lipman's Theorem, i.e., it is to prove that the multiplicity ${\rm mod\,} 2$ of real analytic sets is a differential invariant (see Corollary \ref{gau-lipman_thm}). Let us remark that Y.-N. Gau and J. Lipman's proof does not work in the real setting, since their proof uses, for instance, that the tangent cone at a point of a complex analytic set is a complex algebraic set, which may not happen for tangent cones of real analytic sets.

Let us describe how this paper is organized. In Section \ref{preliminaries}, we present some preliminaries. In Section \ref{main_sec}, we present a result on differential invariance of the multiplicity of real analytic sets (see Theorem \ref{main_result}) and as a corollary, we obtain the real version of Gau-Lipman's Theorem (see Corollary \ref{gau-lipman_thm}) and we present also some examples in order to show that the hypotheses of Theorem \ref{main_result} cannot be removed. In Section \ref{main_sec_two}, we present a generalization of Gau-Lipman's Theorem (see Theorem \ref{generalization_gau-lipman_thm}), which is the complex version of Theorem \ref{main_result}. An example showing that the hypotheses in Theorem \ref{generalization_gau-lipman_thm} are weaker than the hypotheses in Gau-Lipman's Theorem is also presented (see Example \ref{weaker_hyp_gau_lipman}).

\bigskip


\section{Preliminaries}\label{preliminaries}
Here, all real analytic sets are supposed to be pure dimensional.
\begin{definition}
Let $X\subset \R^n$ be a subset such that $x_0\in \overline{X}$.
We say that $v\in \R^n$ is a tangent vector of $X$ at $x_0\in\R^n$ if there is a sequence of points $\{x_i\}\subset X$ tending to $x_0\in \R^n$ and there is a sequence of positive numbers $\{t_i\}\subset\R^+$ such that 
$$\lim\limits_{i\to \infty} \frac{1}{t_i}(x_i-x_0)= v.$$
Let $C(X,x_0)$ denote the set of all tangent vectors of $X$ at $x_0\in \R^n$. We call $C(X,x_0)$ the {\bf tangent cone} of $X$ at $x_0$.
\end{definition}
\begin{remark}{\rm It follows from the curve selection lemma for subanalytic sets that, if $X\subset \R^n$ is a subanalytic set and $x_0\in \overline{X}$ is a non-isolated point, then the following holds true }
\begin{eqnarray*}
C(X,x_0)=\{v;\, \exists\, \mbox{ subanalytic }\alpha:[0,\varepsilon )\to \R^n\,\, \mbox{s.t.}\,\, \alpha(0)=x_0,\, \alpha((0,\varepsilon ))\subset X\,\, \mbox{and}\,\,\\ 
\alpha(t)-x_0=tv+o(t)\}. 
\end{eqnarray*}
\end{remark}
\begin{definition}
The mapping $\beta _n:\mathbb{S}^{n-1}\times \R^+\to \R^n$ given by $\beta_n(x,r)=rx$ is called {\bf spherical blowing-up} (at the origin) of $\R^n$.
\end{definition}
Note that $\beta _n:\mathbb{S}^{n-1}\times (0,+\infty )\to \R^n\setminus \{0\}$ is a homeomorphism with inverse $\beta_n^{-1}:\R^n\setminus \{0\}\to \mathbb{S}^{n-1}\times (0,+\infty )$ given by $\beta_n^{-1}(x)=(\frac{x}{\|x\|},\|x\|)$.
\begin{definition}
The {\bf strict transform} of the subset $X$ under the spherical blowing-up $\beta_n$ is $X':=\overline{\beta_n^{-1}(X\setminus \{0\})}$ and the {\bf boundary} $\partial X'$ of the {\bf strict transform} is $\partial X':=X'\cap (\mathbb{S}^{n-1}\times \{0\})$.
\end{definition}
Remark that $\partial X'=C_X\times \{0\}$, where $C_X=C(X,0)\cap \mathbb{S}^{n-1}$.

\subsection{Multiplicity and relative multiplicities}\label{section:mainresult}


Let $X\subset \R^{n}$ be a $d$-dimensional real analytic set with $0\in X$ and 
$$
X_{\C}= V(\mathcal{I}_{\R}(X,0)),
$$
where $\mathcal{I}_{\R}(X,0)$ is the ideal in $\mathbb{C}\{z_1,...,z_n\}$ generated by the complexifications of all germs of real analytic functions that vanish on the germ $(X,0)$. We have that $X_{\C}$ is a germ of a complex analytic set and $\dim_{\C}X_{\C}=\dim_{\R}X$ (see \cite[Propositions 1 and 3, pp. 91-93]{Narasimhan:1966}). Then, for a linear projection $\pi:\C^{n}\to\C^d$ such that $\pi^{-1}(0)\cap C(X_{\C},0) =\{0\}$, there exists an open neighborhood $U\subset \C^n$ of $0$ such that $\# (\pi^{-1}(x)\cap (X_{\C}\cap U))$ is constant for a generic point $x\in \pi(U)\subset\C^d$. This number is the multiplicity of $X_{\C}$ at the origin and it is denoted by $m(X_{\C},0)$.

\begin{definition}
With the above notation, we define the multiplicity of $X$ at the origin by $m(X):=m(X_{\C},0)$.
\end{definition}

\begin{definition}
We shall not distinguish between a $2(n-d)$-dimensional real linear subspace in $\C^n$ and its canonical image in $G^{2n}_{2(n-d)}(\R)$. Thus, we regard $G^{n}_{n-d}(\C)$ as a subset of $G^{2n}_{2(n-d)}(\R)$.
Let $\mathcal{E}(X_{\C})$ denote the subset of $G^{2n}_{2(n-d)}(\R)$ consisting of all $L\in G^{2n}_{2(n-d)}(\R)$ such that $L\cap C(X_{\C},0)=\{0\}$.
\end{definition}

\begin{remark}\label{transversal-cones}
We have the following comments on the set  $\mathcal{E}(X_{\C})$.
\begin{enumerate}
 \item [(i)]\label{transversal-cones-i} $\mathcal{E}(X_{\C})$ is an open dense set in $G^{2n}_{2(n-d)}(\R)\cong G^{2n}_{2d}(\R)$ (see \cite[Lemme 1.4]{Comte:2000});
 \item [(ii)]\label{transversal-cones-ii} For each $L \in \mathcal{E}(X_{\C})\cap G^{n}_{n-d}(\C)$, let $\pi_L\colon \C^n\to L^{\perp}$ be the orthogonal projection over $L$. Then, there exist a polydisc $U\subset \C^n$ and a complex analytic set $\sigma\subset U':=\pi_L(U)$ such that $\dim \sigma <\dim X_{\C}$ and $\pi_L\colon (U\cap X_{\C})\setminus \pi_L^{-1}(\sigma)\to U'\setminus \sigma$ is a $k$-sheeted cover with $k=m(X_{\C},0)$ (see \cite[Theorem 7P, p. 234]{Whitney:1972});
 \item [(iii)]\label{transversal-cones-iii} Since $\pi:=\pi_L$ is an $\R$-linear mapping, we identify the $d$-dimensional real linear subspace $\pi(\R^n)$ with $\R^d$ and, with this identification, we obtain that $\R^d\cap \sigma$ is a closed nowhere dense subset of $\R^d\cap U'$. Indeed, it is clear that $\R^d\cap \sigma$ is a closed subset of $\R^d\cap U'$ and, thus, if $\sigma$ is somewhere dense in $\R^d\cap U'$, then $\sigma$ contains an open ball  $B_r(p)\subset\R^d\cap U'$, which implies that $\sigma$ must contain a non-empty open subset of $U'$ (see \cite[Proposition 1, p. 91]{Narasimhan:1966}) and, thus, we obtain a contradiction. Therefore, $\sigma$ is nowhere dense in $\R^d\cap U'$ and, then, $\R^d\cap U'\setminus \sigma$ is an open dense subset of $\R^d\cap U'$;
 \item [(iv)]\label{transversal-cones-iv} For a generic point $x\in \R^d$ near to the origin (i.e., for $x\in (\R^d\cap U')\setminus \sigma$), we have
\begin{eqnarray*}
m(X_{\C},0)&=&\# (\pi^{-1}(x)\cap (X_{\C}\cap U))\\
           &=&\# (\R^n\cap \pi^{-1}(x)\cap (X_{\C}\cap U))+\# ((\C^n\setminus \R^n)\cap \pi^{-1}(x)\cap (X_{\C}\cap U))\\
           &=&\# (\pi^{-1}(x)\cap (X\cap U))+\# (\pi^{-1}(x)\cap ((X_{\C}\setminus \R^n)\cap U)).
\end{eqnarray*}
Since for each $f\in \mathcal{I}_{\R}(X,0)$, we may write $f(z)=\sum \limits _{|I|=k}^{\infty}a_Iz^I$ such that $a_I\in\R$ for all $I$, then $f(z_1,...,z_n)=0$ if and only if $f(\bar z_1,...,\bar z_n)=0$, where each $\bar z_i$ denotes the complex conjugate of $z_i$. In particular, $\# (\pi^{-1}(x)\cap ((X_{\C}\setminus \R^n)\cap U))$ is an even number. Therefore, we obtain that
$m(X)\equiv \# (\pi^{-1}(x)\cap (X\cap U)) \,{\rm mod\,} 2$ for a generic point $x\in \R^d$ near to the origin. 
\end{enumerate}
\end{remark}

\begin{definition}
Let $X\subset \R^n$ be a subanalytic set such that $0\in \overline X$ is a non-isolated point. We say that $x\in\partial X'$ is {\bf a simple point of $\partial X'$}, if there is an open set $U\subset \R^{n+1}$ with $x\in U$ such that:
\begin{itemize}
\item [a)] the connected components of $(X'\cap U)\setminus \partial X'$, say $M_1,..., M_r$, are topological manifolds with $\dim M_i=\dim X$, $i=1,...,r$;
\item [b)] $(M_i\cup \partial X')\cap U$ are topological manifolds with boundary. 
\end{itemize}
Let $Smp(\partial X')$ be the set of simple points of $\partial X'$.
\end{definition}

\begin{remark}\label{density_top}
By Theorems 2.1 and 2.2 in \cite{Pawlucki:1985}, we obtain that ${\rm Smp}(\partial X')$ is an open dense subset of the $(d-1)$-dimensional part of $\partial X'$ whenever $\partial X'$ is a $(d-1)$-dimensional subset, where $d=\dim X$.
\end{remark}
\begin{definition}
Let $X\subset \R^n$ be a subanalytic set such that $0\in X$.
We define $k_X:Smp(\partial X')\to \N$ such that $k_X(x)$ is the number of connected components of the germ $(\beta_n^{-1}(X\setminus\{0\}),x)$.
\end{definition}
\begin{remark}\label{locally-constant}
It is clear that the function $k_X$ is locally constant. In fact, $k_X$ is constant in each connected component $C_j$ of $Smp(\partial X')$. Then, we define $k_X(C_j):=k_X(x)$ with $x\in C_j$.
\end{remark}
\begin{remark}
The numbers $k_X(C_j)$ are equal to the numbers $n_j$ defined by Kurdyka and Raby \cite{Kurdyka:1989}, p. 762.
\end{remark}
\begin{remark}
When $X$ is a complex analytic set, there is a complex analytic set $\Sigma$ with $\dim \Sigma <\dim X$, such that $X_j\setminus \Sigma$ intersects only one connected component $C_i$ of $Smp(\partial X')$ (see \cite{Chirka:1989}, pp. 132-133), for each irreducible component $X_j$ of the tangent cone $C(X,0)$. Then we define $k_X(X_j):=k_X(C_i)$.
\end{remark}
\begin{remark}[{\cite[p. 133, Proposition]{Chirka:1989}}]\label{multip}
{\rm Let $X$ be a complex analytic set of $\C^n$ with $0\in X$ and let $X_1,...,X_r$ be the irreducible components of $C(X,0)$. Then} 
\begin{equation*}\label{mult_rel}
m(X,0)=\sum_{j=1}^rk_X(X_j)\cdot m(X_j,0).
\end{equation*}
\end{remark}
\begin{definition}
Let $X\subset \R^{n}$ be a real analytic set with $0\in X$. We denote by $C_X'$ the union of all connected components $C_j$ of $Smp(\partial X')$ having odd $k_X(C_j)$. We call $C_X'$ the {\bf odd part of $C_X\subset \mathbb{S}^{n-1}$}.
\end{definition}

\begin{definition}\label{multiplicity-odd-cone}
Let $X\subset \R^{n}$ be a $d$-dimensional real analytic set with $0\in X$, $L \in \mathcal{E}(X_{\C})\cap G^{n}_{n-d}(\C)$, let $\pi:=\pi_L\colon \C^n\to L^{\perp}$ be the orthogonal projection over $L$. Let $\pi':\mathbb{S}^{n-1}\setminus L\to \mathbb{S}^{d-1}$ be the mapping given by $\pi'(u)=\frac{\pi(u)}{\|\pi(u)\|}$, where we are identifying $\pi(\R^n)$ with $\R^d$ and $\pi(\R^n)\cap \mathbb{S}^{2n-1}$ with $\mathbb{S}^{d-1}$ (see Remark \ref{transversal-cones} (iii)). We define
$$\varphi_{\pi,C_X'}(x):=\#(\pi'^{-1}(x)\cap C_X').$$
In this case, if $\varphi_{\pi,C_X'}(x) \,{\rm mod\,} 2$ is constant for a generic $x\in\mathbb{S}^{d-1}$, we write $m_{\pi}(C_X'):=\varphi_{\pi,C_X'}(x)\, {\rm mod\,} 2$, for a generic $x\in\mathbb{S}^{d-1}$.
\end{definition}

%

\section{Proof of the real version of Gau-Lipman's Theorem}\label{main_sec}
In this Section, we show that the multiplicity ${\rm mod\,} 2$ of a real analytic set is a differential invariant, which is the real version of Gau-Lipman's Theorem. In fact, we prove a little bit more, as we can see in the next result.

\begin{theorem}\label{main_result}
Let $X, Y \subset  \R^N$ be two real analytic sets  with $0\in X\cap Y$. Assume that there exists a mapping $\varphi\colon (\R^N,0)\to (\R^N,0)$ such that $\varphi\colon (X,0)\to (Y,0)$ is a homeomorphism. If $\varphi$ has a derivative at the origin and $D\varphi_0\colon \R^N\to \R^N$ is an isomorphism, then $m(X)\equiv m(Y)\, {\rm mod\,} 2$.
\end{theorem}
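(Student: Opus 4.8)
The plan is to express $m(X)\bmod 2$ entirely through the tangent cone, by means of the mod-$2$ counting invariant $m_\pi(C_X')$ of Definition \ref{multiplicity-odd-cone}, and then to transport this invariant from $X$ to $Y$ using the linear isomorphism $h:=D\varphi_0$. We may assume $0$ is non-isolated in $X$, the case $d:=\dim X=0$ being immediate. First I would check that $h$ carries the tangent cone of $X$ onto that of $Y$, i.e.\ $h(C(X,0))=C(Y,0)$. The inclusion $\subseteq$ is immediate from the definition of a tangent vector, writing $\varphi(x_i)=h(x_i)+o(\|x_i\|)$ along a sequence $x_i/t_i\to v$; for the reverse inclusion I would use that $\varphi\colon X\to Y$ is bijective and $h$ is an isomorphism, so that a sequence $y_i=\varphi(x_i)\in Y$ realizing $w\in C(Y,0)$ yields, after passing to a subsequence with $x_i/\|x_i\|\to u\in C(X,0)\cap\mathbb{S}^{N-1}$, the relation $y_i/\|x_i\|\to h(u)\neq 0$, whence $w$ is a positive multiple of $h(u)$. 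In particular $\dim Y=d$, and $\bar h\colon u\mapsto h(u)/\|h(u)\|$ restricts to a homeomorphism $C_X\to C_Y$.

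Next I would lift $\varphi$ to the spherical blowings-up by setting $\tilde\varphi:=\beta_N^{-1}\circ\varphi\circ\beta_N$. Because $\varphi(ru)-rh(u)=o(r)$ uniformly in $u\in\mathbb{S}^{N-1}$ (differentiability at $0$), the map $\tilde\varphi$ extends continuously to the boundary via $\overline{\tilde\varphi}(u,0)=(\bar h(u),0)$, so that $\overline{\tilde\varphi}\colon X'\to Y'$ is a continuous bijection which agrees with the homeomorphism induced by $\varphi$ on $X'\setminus\partial X'$ and with $\bar h$ on $\partial X'=C_X\times\{0\}$. Restricting to the compact slices $X'\cap(\mathbb{S}^{N-1}\times[0,\delta])$ and using that $\varphi$ sends neighbourhoods of $0$ in $X$ onto neighbourhoods of $0$ in $Y$, I would upgrade $\overline{\tilde\varphi}$ to a germ homeomorphism of pairs $(X',\partial X')\to(Y',\partial Y')$. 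As such it preserves simple points and the number of connected components of the germ of the interior, so $\bar h$ carries $\mathrm{Smp}(\partial X')$ onto $\mathrm{Smp}(\partial Y')$ with $k_X=k_Y\circ\bar h$; in particular it matches the odd parts, $\bar h(C_X')=C_Y'$.

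The analytic core is the congruence $m(X)\equiv m_\pi(C_X')\pmod 2$ for a projection $\pi$ transversal to $C(X_\C,0)$. I would derive it by combining Remark \ref{transversal-cones}(iv), which already gives $m(X)\equiv\#(\pi^{-1}(x)\cap X\cap U)\pmod 2$ for a generic small real $x$, with a sheet-count near the cone: for $x=r\xi$ with $\xi\in\mathbb{S}^{d-1}$ generic and $r\to 0^+$, the points of $\pi^{-1}(x)\cap X$ accumulate only on directions $u\in C_X$ with $\pi'(u)=\xi$, and each simple component $C_j$ contributes exactly $k_X(C_j)$ sheets; reducing mod $2$ retains only the components of odd $k_X$, giving $\#(\pi'^{-1}(\xi)\cap C_X')=\varphi_{\pi,C_X'}(\xi)$. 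This shows simultaneously that $\varphi_{\pi,C_X'}(\cdot)\bmod 2$ is generically constant, so $m_\pi(C_X')$ is well defined, and that $m_\pi(C_X')\equiv m(X)$; the identical statement holds for $Y$. I would then read $\varphi_{\pi,C_X'}(\xi)\bmod 2$ as the mod-$2$ degree of a generic linear projection restricted to the closed $(d-1)$-dimensional mod-$2$ cycle $C_X'$, that is, as a linear invariant of $C_X'$ (equivalently, the mod-$2$ intersection number of $C_X'$ with a generic fibre). Since the linear projection $\pi''\circ h$ is transversal to $C(X,0)$ whenever $\pi''$ is transversal to $C(Y,0)=h(C(X,0))$, and $\bar h\colon C_X'\to C_Y'$ is a homeomorphism, this degree is preserved under $\bar h$, giving $m_\pi(C_X')\equiv m_{\pi''}(C_Y')\pmod 2$. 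Chaining $m(X)\equiv m_\pi(C_X')\equiv m_{\pi''}(C_Y')\equiv m(Y)$ then yields the theorem.

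The step I expect to be the main obstacle is this parity lemma $m(X)\equiv m_\pi(C_X')\pmod 2$ together with its reinterpretation as a linear mod-$2$ degree. This is precisely the point at which the argument of Gau and Lipman breaks down in the real setting, since it relies on the algebraicity of the complex tangent cone and on the additive formula $m(X,0)=\sum_j k_X(X_j)\,m(X_j,0)$ of Remark \ref{multip}; in the real case one must instead control how the sheets of $X$ distribute over the tangent directions and replace that additive formula by a mod-$2$ intersection/degree computation on the odd part of the link of the tangent cone.
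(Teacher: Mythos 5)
Your first three steps are sound and essentially reproduce the paper's own Claims \ref{homeo_bs}, \ref{preserves_odd_cone} and \ref{mult_odd_cone}: the blow-up of $\varphi$ extends by $\bar h$ to a homeomorphism of pairs $(X',\partial X')\to (Y',\partial Y')$, this matches simple points and the multiplicities $k_X=k_Y\circ\bar h$ (hence the odd parts), and the parity lemma $m(X)\equiv m_\pi(C_X')\bmod 2$ is proved by exactly the sheet-counting you sketch. The gap is in your final transport step. The set-theoretic part is fine: since $\bar h$ is a bijection $C_X'\to C_Y'$ carrying fibres of $(\pi''\circ h)'$ onto fibres of ${\pi''}'$, you get the exact equality $\#({\pi''}'^{-1}(\xi)\cap C_Y')=\#\bigl((\pi''\circ h)'^{-1}(\xi)\cap C_X'\bigr)$. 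But you then need the right-hand count, for the projection $q:=\pi''\circ h$, to compute $m(X)\bmod 2$ (equivalently, to agree with $m_\pi(C_X')$), and your parity lemma is only available for projections whose kernel is a real plane whose \emph{complexification} lies in $\mathcal{E}(X_{\C})$, i.e.\ is transversal to the \emph{complex} tangent cone $C(X_{\C},0)$: its proof rests on Remark \ref{transversal-cones} (ii)--(iv), which require the covering structure of $\pi_L|_{X_{\C}}$. Since $h=D\varphi_0$ is only $\R$-linear, the only thing you verify (and the only thing that follows from your step 1) is that $\ker q$ meets the \emph{real} cone $C(X,0)$ at $0$, and this does not imply complex transversality. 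Concretely, for the pure $2$-dimensional set $X=\{x_1^2+x_2^2=x_3^3,\ x_4=0\}\subset\R^4$ one has $C(X,0)=\R^{\geq 0}e_3$, so the real plane $M={\rm span}_{\R}\{e_1,e_2\}$ satisfies $M\cap C(X,0)=\{0\}$; yet $M_{\C}$ contains $(i,1,0,0)\in C(X_{\C},0)=\{z_1=\pm iz_2,\,z_4=0\}$, and indeed the fibres of $\pi_M|_X$ (and of $\pi_{M_\C}|_{X_\C}$) are circles (curves), so no covering, no finite count, and no parity statement exist for such a projection. Hence the asserted congruence $m_{\pi''}(C_Y')\equiv m_\pi(C_X')$ is unproven as justified.

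Your fallback justification --- reading the count as a mod-$2$ degree of the projection restricted to the ``cycle'' $C_X'$ --- is not free either: you would have to prove that $\overline{C_X'}$ (with its mod-$2$ multiplicities) actually carries a mod-$2$ fundamental class, and that the degree is the same for the two projections $\pi$ and $q$, which needs a path joining them inside the transversal projections (that space can be disconnected, as its complement in the Grassmannian has codimension one) or an independent homological argument; none of this is addressed, and it is precisely the hard content. The gap is repairable in two ways. (i) Within your scheme: choose the real plane $M''$ defining $\pi''$ generically so that \emph{simultaneously} $M''_{\C}\in\mathcal{E}(Y_{\C})$ and $\bigl(h^{-1}(M'')\bigr)_{\C}\in\mathcal{E}(X_{\C})$ --- both conditions are open and dense --- and then apply the parity lemma on both sides together with your set-theoretic transport; no degree theory is needed. (ii) The paper's route, which avoids comparing two different projections on $C_X'$ altogether: it compares $Y$ with $A:=h(X)$ via $\psi=h\circ\varphi^{-1}$, whose blow-up is the \emph{identity} on the boundary sphere, so that $C_Y'$ and $C_A'$ are literally the same subset of $\mathbb{S}^{N-1}$ and one single projection, generic for $Y_{\C}\cup A_{\C}$, computes both parities; the remaining equality $m(X)=m(A)$ is obtained on the complex side, since the complexification of the $\R$-linear map $h$ is a $\C$-linear isomorphism taking $X_{\C}$ to $A_{\C}$.
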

\begin{proof}
Since $\phi:=D\varphi_0:\R^{N} \to \R^{N}$ is an $\R$-linear isomorphism, we have that $A=\phi(X)$ is a real analytic set.

We have that the complexification of $\phi$, denoted by $\phi_{\C}$, is a complex diffeomorphism between $X_{\C}$ and $A_{\C}$. Thus, by Proposition in (\cite{Chirka:1989}, Section 11, p. 120), $m(X_{\C},0)=m(A_{\C},0)$. Therefore, $m(X)=m(A)$.

Thus, it is enough to show that $m(Y)\equiv m(A)\, {\rm mod\,} 2$. In order to do this, we consider the mapping $\psi\colon (Y,0)\to (A,0)$ given by $\psi=\phi\circ \varphi^{-1}$. 

\begin{claim}\label{homeo_bs}
The mapping $\psi': Y'\to A'$ given by
$$
\psi'(x,t)=\left\{\begin{array}{ll}
\left(\frac{\psi(tx)}{\|\psi(tx)\|},\|\psi(tx)\|\right),& t\not=0\\
(x,0),& t=0,
\end{array}\right.
$$
is a homeomorphism.
\end{claim}
\begin{proof}[Proof of Claim \ref{homeo_bs}]
Observe that $\nu\colon\mathbb{S}^{N-1}\to \mathbb{S}^{N-1}$ given by $$\nu(x)=\frac{\phi(x)}{\|\phi(x)\|}$$ 
is a homeomorphism and using that $\varphi(tx)=t\phi(x)+o(t)$, we obtain
$$
\lim\limits _{t\to 0^+}\frac{\varphi(tx)}{\|\varphi(tx)\|}=\frac{\phi(x)}{\|\phi(x)\|}=\nu(x).
$$
Therefore, the mappings $\phi': \mathbb{S}^{N-1}\times [0,\infty)\to \mathbb{S}^{N-1}\times [0,\infty)$ and $\varphi'\colon X'\to Y '$ given by
$$
\phi'(x,t)=\left\{\begin{array}{ll}
\left(\frac{\phi(tx)}{\|\phi(tx)\|},\|\phi(tx)\|\right),& t\not=0\\
(\nu(x),0),& t=0
\end{array}\right.
$$
and
$$
\varphi'(x,t)=\left\{\begin{array}{ll}
\left(\frac{\varphi(tx)}{\|\varphi(tx)\|},\|\varphi(tx)\|\right),& t\not=0\\
(\nu(x),0),& t=0
\end{array}\right.
$$
are homeomorphisms, which implies that the mapping $(\varphi^{-1})': Y'\to X '$ given by
$$
(\varphi^{-1})'(x,t)=\left\{\begin{array}{ll}
\left(\frac{\varphi^{-1}(tx)}{\|\varphi^{-1}(tx)\|},\|\varphi^{-1}(tx)\|\right),& t\not=0\\
(\nu^{-1}(x),0),& t=0,
\end{array}\right.
$$
is also a homeomorphism. 
Since $\psi'=\phi'\circ (\varphi^{-1})'$, we finish the proof of Claim \ref{homeo_bs}.
\end{proof}
As a direct consequence, we obtain that $Smp(\partial Y')=\psi'(Smp(\partial Y'))=Smp(\partial A')$. 
\begin{claim}\label{preserves_odd_cone}
 $k_Y(p)=k_{A}(p)$ for all $p\in Smp(\partial Y')$. 
\end{claim}
\begin{proof}[Proof of Claim \ref{preserves_odd_cone}]
In fact, let $p\in Smp(\partial Y')$ be a point and let $U \subset Y'$ be a small neighborhood of $p$. Since $\psi':Y'\to A'$ is a homeomorphism, we have that $V=\psi'(U)$ is a small neighborhood of $p=\psi'(p)\in \partial A'$. Moreover, $\psi'(U\setminus \partial Y')=V\setminus \partial A'$, since $\psi'|_{\partial Y'}:\partial Y'\to \partial A'$ is a homeomorphism, as well. Using once more that $\psi'$ is a homeomorphism, we obtain that the number of connected components of $U\setminus \partial Y'$ is equal to the number of connected components of $V\setminus \partial A'$, showing that $k_Y(p)=k_{A}(p)$ for all $p\in Smp(\partial Y')$.
\end{proof}

As a direct consequence, we obtain that $C_Y'=\psi'(C_Y')=C_{A}'$.

Let $L \in \mathcal{E}(Y_{\C})\cap G^{N}_{N-d}(\C)$ and let $\pi:=\pi_L\colon \C^N\to L^{\perp}$ be the orthogonal projection over $L$, where $d=\dim Y$ (see Remark \ref{transversal-cones}). Let $\pi':\mathbb{S}^{N-1}\setminus L\to \mathbb{S}^{d-1}$ be given by $\pi'(u)=\frac{\pi(u)}{\|\pi(u)\|}$, where we are identifying $\pi(\R^N)$ with $\R^d$ and $\pi(\R^N)\cap \mathbb{S}^{2N-1}$ with $\mathbb{S}^{d-1}$ as in Definition \ref{multiplicity-odd-cone}.

\vspace{0.3cm}

\begin{claim}\label{mult_odd_cone}
$\varphi_{\pi,C_Y'}(y)=\#(\pi'^{-1}(y)\cap C_Y')\, {\rm mod\,} 2$ is constant for a generic point $y\in\mathbb{S}^{d-1}$. Moreover, $m_{\pi}(C_Y')\equiv m(Y)\, {\rm mod\,} 2$.
\end{claim}
\begin{proof}[Proof of Claim \ref{mult_odd_cone}]
If $\dim C_Y<d-1$ then $C_Y'=\emptyset$ and $\dim C(\pi(Y),0)<d$, which implies that there exist $w\in \mathbb{S}^{d-1}$ and small enough numbers $\eta,\varepsilon\in (0,1)$ such that $C_{\eta,\varepsilon }(y)\cap \pi(Y)=\emptyset $, where $C_{\eta,\varepsilon }(w)=\{v\in \R^d;\, \|v-tw\|\leq \eta t, \,t\in(0,\varepsilon]\}$. Therefore $\varphi_{\pi,C_Y'}(y)=0$ for any point $y\in\mathbb{S}^{d-1}$ and $m(Y)\equiv 0\, {\rm mod\,} 2$, since $C_Y'=\emptyset$ and $\pi^{-1}(v)\cap Y=\emptyset$, for all $v\in C_{\eta,\varepsilon }(w)$ (see Remark \ref{transversal-cones} (iv)). In particular, $m_{\pi}(C_Y')$ is defined and satisfies $m_{\pi}(C_Y')\equiv m(Y)\, {\rm mod\,} 2$.

Thus, we may assume that $\dim C_Y=d-1$. By Remark \ref{density_top}, $Smp(\partial Y')$ is an open dense subset of the $(d-1)$-dimensional part of $\partial Y'=C_Y\times \{0\}\cong C_Y$.
Let $y\in \mathbb{S}^{d-1}$ be a generic point such that $\pi'^{-1}(y)\cap C_Y=\pi'^{-1}(y)\cap Smp(\partial Y')=\{y_1,...,y_p\}$ and $u=\#(\pi^{-1}(ty)\cap Y)\equiv m(Y) {\rm mod}\,2$, for all small enough $t>0$ (see Remark \ref{transversal-cones} (iv)). Then, we have the following
\begin{equation*}
u= \sum\limits _{j=1}^p k_Y(y_j).
\end{equation*}

In fact, let $\eta,\varepsilon>0$ be small enough numbers such that $C_{\eta,\varepsilon }(y)\cap \pi(br(\pi|_Y))=\emptyset $, where $C_{\eta,\varepsilon }(y)=\{v\in \R^d;\, \|v-ty\|\leq \eta t, \,t\in(0,\varepsilon]\}$ and $br(\pi|_Y)$ denotes the set of all critical points of $\pi|_Y$. Thus, denote the connected components of $(\pi|_Y)^{-1}(C_{\eta,\varepsilon }(y))$ by $Y_1,...,Y_u$. Hence, $\pi|_{Y_i}:Y_i\to C_{\eta,\varepsilon }(y)$ is a homeomorphism, for $i=1,...,u$. Thus, for each $i=1,...,u$, there is a unique $\gamma_i\colon (0,\varepsilon)\to Y_i$ such that $\pi(\gamma_i(t))=ty$ for all $t\in (0,\varepsilon)$. We define for each $i=1,...,u$, $\widetilde\gamma_i\colon [0,\varepsilon)\to \overline{\beta_{N}^{-1}(Y_i)}$ given by $\widetilde \gamma_i(s)=\lim\limits_{t\to s^+}\beta_{N}^{-1}\circ \gamma_i(t)$, for all $s\in [0,\varepsilon)$.

We remark that $\widetilde \gamma_i(0)=\lim\limits _{t\to 0^+}\widetilde \gamma_i(t)\in \{y_1,...,y_p\}$, for all $i=1,...,u$ and, thus, $u\leq \sum\limits _{j=1}^p k_Y(y_j)$. Shrinking $\eta$, if necessary, we can suppose that each $C_{Y_i}$ contains at most one $y_j$. Thus for fixed $y_j$ and if $\gamma:[0,\delta)\to Y$ is a subanalytic curve such that $\lim\limits _{t\to 0^+}\beta_{N}^{-1}\circ \gamma(t)=y_j$, then there exists $\delta_0>0$ such that $\pi(\gamma(t))\in C_{\eta,\varepsilon }(y)$, for all $0<t<\delta_0$. So, there is $i\in \{1,...,u\}$ such that $\gamma(t)\in Y_i$, with $0<t<\delta_0$. Then, $\widetilde \gamma_i(0)=y_j$ and we obtain the equality $u=\sum\limits _{j=1}^p k_Y(y_j)$. 

Let $C_1,...,C_r$ be the connected components of $Smp(\partial Y')$. By Remark \ref{locally-constant}, we know that $k_Y$ is constant in each $C_i$ and, thus, if $y_j, y_{j'}\in C_i$ then $k_Y(y_j)=k_Y(y_{j'})$. Since $\pi'^{-1}(y)\cap C_Y=\pi'^{-1}(y)\cap Smp(\partial Y')=\{y_1,...,y_p\}$, we have
$$
u=\sum\limits _{j=1}^p k_Y(y_j)=\sum\limits _{i\in \Lambda} k_Y(C_i)\cdot \#(\pi'^{-1}(y)\cap C_i),
$$
where $\Lambda=\{i\in\{1,...,r\}; \pi'^{-1}(y)\cap C_i\not=\emptyset\}$.
Therefore, we obtain
$$
u=\sum\limits _{i=1}^r k_Y(C_i)\cdot \#(\pi'^{-1}(y)\cap C_i).
$$
However, $\sum\limits _{i=1}^r k_Y(C_i)\cdot \#(\pi'^{-1}(y)\cap C_i)\equiv \#(\pi'^{-1}(y)\cap C_Y')\, {\rm mod\,} 2$ and $u\equiv m(Y)\, {\rm mod\,} 2$, then
$$
m(Y)\equiv \#(\pi'^{-1}(y)\cap C_Y')\, {\rm mod\,} 2,
$$
for a generic $y\in\mathbb{S}^{d-1}$, which shows that $\varphi_{\pi,C_Y'}(y)=\#(\pi'^{-1}(y)\cap C_Y')\, {\rm mod\,} 2$ is constant for a generic point $y\in\mathbb{S}^{d-1}$ and, thus, $m_{\pi}(C_Y')$ is defined and satisfies $m_{\pi}(C_Y')\equiv m(Y)\, {\rm mod\,} 2$.
\end{proof}
Then, we obtain that $m_{\pi}(C_Y')$ does not depend on a generic $\pi$, since $m(Y)$ does not depend on a generic $\pi$. Similarly, we obtain that $m_{\bar{\pi}}(C_{A}')$ does not depend on a generic projection $\bar{\pi}$ and $m_{\bar{\pi}}(C_{A}')\equiv m({A})\, {\rm mod\,} 2$. Thus, we write $m(C_Y')$ (resp. $m(C_{A}')$) instead of $m_{\pi}(C_Y')$ (resp. $m_{\bar{\pi}}(C_{A}')$). 

Let $\tilde L \in \mathcal{E}(Y_{\C}\cup A_{\C})\cap G^{N}_{N-d}(\C)$ and let $\tilde\pi:=\pi_{\tilde L}\colon \C^N\to \tilde L^{\perp}$ be the orthogonal projection over $\tilde L$. Let $\tilde\pi':\mathbb{S}^{N-1}\setminus \tilde L\to \mathbb{S}^{d-1}$ given by $\tilde\pi'(u)=\frac{\tilde\pi(u)}{\|\tilde\pi(u)\|}$ as in Definition \ref{multiplicity-odd-cone}.  Then, for a generic $y\in \mathbb{S}^{d-1}$, we obtain the following
\begin{eqnarray*}
m(Y) &\equiv & m(C_Y')\, {\rm mod\,} 2\quad (\mbox{by Claim \ref{mult_odd_cone}})\\
     &\equiv & \#(\tilde\pi'^{-1}(y)\cap C_Y')\, {\rm mod\,} 2\quad (\mbox{by the definition of } m(C_Y'))\\
	 &\equiv & \#(\tilde\pi'^{-1}(y)\cap C_{A}')\, {\rm mod\,} 2\quad (\mbox{since } C_Y'=C_{A}')\\
	 &\equiv & m(C_{A}')\, {\rm mod\,} 2\quad (\mbox{by the definition of } m(C_{A}'))\\
	 &\equiv & m(A)\, {\rm mod\,} 2\quad (\mbox{by Claim \ref{mult_odd_cone}}),
\end{eqnarray*}
%
which finishes the proof.
\end{proof}
As consequences, we obtain the following.
\begin{corollary}\label{gau-lipman_thm}
Let $X ,Y\subset \R^N$ be two real analytic sets containing $0$. If there exists a homeomorphism $\varphi\colon (\R^N,X,0)\to (\R^N,Y,0)$ such that $\varphi$ and $\varphi^{-1}$ have a derivative at the origin, then $m(X)\equiv m(Y)\, {\rm mod\,} 2$.
\end{corollary}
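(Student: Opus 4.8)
The plan is to derive the corollary directly from Theorem~\ref{main_result}; the only real work is to check that the hypotheses of that theorem are met, and in particular that the derivative $D\varphi_0$ is a linear isomorphism.

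First I would observe that the ambient homeomorphism $\varphi\colon(\R^N,X,0)\to(\R^N,Y,0)$ carries $X$ onto $Y$ and fixes the origin, so its restriction is a homeomorphism $\varphi\colon(X,0)\to(Y,0)$. By hypothesis $\varphi$ has a derivative at the origin. Thus all the assumptions of Theorem~\ref{main_result} are already in place except for the requirement that $D\varphi_0$ be an isomorphism, which is what remains to be verified.

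The key step is a chain-rule argument. Since $\varphi(0)=0$ and $\varphi^{-1}(0)=0$, and since both $\varphi$ and $\varphi^{-1}$ are differentiable at the origin, the chain rule for total derivatives applies to the compositions $\varphi^{-1}\circ\varphi=\mathrm{id}$ and $\varphi\circ\varphi^{-1}=\mathrm{id}$. Differentiating both at the origin yields
\[
D(\varphi^{-1})_0\circ D\varphi_0=\mathrm{id}_{\R^N}\qquad\text{and}\qquad D\varphi_0\circ D(\varphi^{-1})_0=\mathrm{id}_{\R^N}.
\]
Hence $D\varphi_0$ is invertible, with inverse $D(\varphi^{-1})_0$, and in particular $D\varphi_0\colon\R^N\to\R^N$ is an isomorphism.

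With this established, Theorem~\ref{main_result} applies verbatim and gives $m(X)\equiv m(Y)\,{\rm mod}\,2$, completing the proof. The only delicate point is the chain-rule step: differentiability of $\varphi$ and of $\varphi^{-1}$ at the origin is a pointwise (not $C^1$) hypothesis, so one must invoke the chain rule for Fr\'echet derivatives at a single point rather than any inverse-function-type statement. This is routine, but it is precisely what upgrades ``both maps differentiable'' into ``the derivative is invertible,'' which is the hypothesis that Theorem~\ref{main_result} actually needs.
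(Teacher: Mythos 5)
Your proposal is correct and matches the paper's own proof: the paper likewise deduces from the differentiability of $\varphi$ and $\varphi^{-1}$ at the origin that $D\varphi_0$ is an isomorphism and then invokes Theorem~\ref{main_result}. Your chain-rule argument simply spells out the step the paper states in one line, and it is valid since the pointwise chain rule needs only differentiability of $\varphi$ at $0$ and of $\varphi^{-1}$ at $\varphi(0)=0$.
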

\begin{proof}
Since $\varphi$ and $\varphi^{-1}$ have a derivative at $0$, we have that $D\varphi_0\colon \R^N\to \R^N$ is an isomorphism and by Theorem \ref{main_result}, $m(X)\equiv m(Y)\, {\rm mod\,} 2$.
\end{proof}

\begin{definition}
Let $X\subset \mathbb{R}^n$ and $Y\subset \mathbb{R}^m$ be closed subsets. We say that a continuous mapping $f:X\to Y$ is differentiable at $x\in X$, if there exist an open $U\subset \mathbb{R}^n$ and a continuous mapping $F:U\to \mathbb{R}^m$ such that $x\in U$, $F|_{X\cap U}=f|_{X\cap U}$ and $F$ has a derivative at $x$. 
\end{definition}

\begin{corollary}
Let $X \subset \R^m$ and  $Y\subset \R^n$ be two real analytic sets containing $0$. If there exists a homeomorphism $\phi\colon (X,0)\to (Y,0)$ such that $\phi$ and $\phi^{-1}$ are differentiable at $0$, then $m(X)\equiv m(Y)\, {\rm mod\,} 2$.
\end{corollary}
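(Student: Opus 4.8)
The plan is to reduce to Theorem \ref{main_result} by embedding $X$ and $Y$ into a common ambient space $\R^{m+n}$ and producing a single self-map of $\R^{m+n}$ whose derivative at the origin is an isomorphism and which restricts to a copy of $\phi$. First I would record the two facts this needs: that the multiplicity is unchanged under the standard linear embedding $Z\mapsto Z\times\{0\}$, and that one can build a global map with invertible derivative even though $\phi$ and $\phi^{-1}$ a priori have derivatives between spaces of different dimension.

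Since $\phi$ and $\phi^{-1}$ are differentiable at $0$, I would fix open neighborhoods $U\subset\R^m$ and $V\subset\R^n$ of the origin together with continuous extensions $F\colon U\to\R^n$ and $G\colon V\to\R^m$ satisfying $F|_{X\cap U}=\phi$ and $G|_{Y\cap V}=\phi^{-1}$, with derivatives $A:=DF_0\colon\R^m\to\R^n$ and $B:=DG_0\colon\R^n\to\R^m$. Setting $\tilde X:=X\times\{0\}$ and $\tilde Y:=\{0\}\times Y$ in $\R^{m+n}$, I would define, on a neighborhood of the origin,
$$\Phi(x,y)=\bigl(x-G(F(x)+y),\,F(x)+y\bigr),$$
which is continuous near $0$ because $F(x)+y\in V$ for $(x,y)$ small. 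For $x\in X$ near $0$ one has $F(x)=\phi(x)\in Y$ and $G(F(x))=\phi^{-1}(\phi(x))=x$, so $\Phi(x,0)=(0,\phi(x))\in\tilde Y$; thus $\Phi$ restricts to a homeomorphism $(\tilde X,0)\to(\tilde Y,0)$, which under the identifications $\tilde X\cong X$ and $\tilde Y\cong Y$ is precisely $\phi$.

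Next I would check invertibility of the derivative. The map $(x,y)\mapsto F(x)+y$ is differentiable at $0$ with derivative $(x,y)\mapsto Ax+y$, and $G$ is differentiable at $F(0)=0$, so the chain rule at a point gives differentiability of $\Phi$ at $0$ with
$$D\Phi_0=\begin{pmatrix} I_m-BA & -B\\ A & I_n\end{pmatrix}.$$
Since the lower-right block $I_n$ is invertible, the Schur complement is $(I_m-BA)-(-B)I_n^{-1}A=I_m$, so $\det D\Phi_0=1$ and $D\Phi_0$ is an isomorphism, independently of whether $A$ or $B$ is invertible. Then $\Phi$ meets all the hypotheses of Theorem \ref{main_result} with $N=m+n$, yielding $m(\tilde X)\equiv m(\tilde Y)\,{\rm mod\,}2$. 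To finish, I would argue $m(\tilde X)=m(X)$ and $m(\tilde Y)=m(Y)$: a generic projection $\pi\colon\C^m\to\C^d$ realizing $m(X_{\C},0)$ lifts to $\tilde\pi(z,w)=\pi(z)$ on $\C^{m+n}$, which is still transversal to $C(\tilde X_{\C},0)=C(X_{\C},0)\times\{0\}$ and covers $\C^d$ with the same number of sheets, and similarly for $Y$ after a coordinate permutation (under which multiplicity is invariant by the Proposition in \cite[Section 11, p. 120]{Chirka:1989}).

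The main obstacle is the construction of $\Phi$: the dimension mismatch means neither $A$ nor $B$ can be an isomorphism, so one cannot simply extend $\phi$ to a single ambient map. The formula above is designed so that the off-diagonal blocks cancel the $-BA$ term in the Schur complement, forcing an invertible derivative; getting this algebra right, and verifying that $\Phi$ genuinely restricts to $\phi$ on the embedded sets, is the crux, while the embedding-invariance of the multiplicity is routine.
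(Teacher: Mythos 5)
Your proposal is correct and takes essentially the same route as the paper: your map $\Phi(x,y)=\bigl(x-G(F(x)+y),\,F(x)+y\bigr)$ is exactly the paper's $\varphi(x,y)=(x-\widetilde\psi(y+\widetilde\phi(x)),\,y+\widetilde\phi(x))$, built from continuous extensions of $\phi$ and $\phi^{-1}$ after embedding $X$ and $Y$ into $\R^{m+n}$ as $X\times\{0\}$ and $\{0\}\times Y$. The only cosmetic difference is that you check invertibility of $D\Phi_0$ by a Schur-complement computation and invoke Theorem \ref{main_result} directly, whereas the paper writes down the explicit global inverse of $\varphi$ and applies Corollary \ref{gau-lipman_thm}.
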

\begin{proof}
By hypothesis there are closed representatives $A$ and $B$ respectively of $(X,0)$ and $(Y,0)$ and a homeomorphism $\phi\colon A\to B$ such that $\phi(0)=0$ and, $\phi$ and $\phi^{-1}$ have a derivative at $0$. Let $\widetilde \phi\colon \R^{m} \to \R^{n}$ (resp. $\widetilde \psi\colon \R^{n} \to \R^{m}$) be a continuous extension of $\phi$ (resp. $\phi^{-1}$), which has a derivative at $0\in \R^m$ (resp. $0\in \R^n$). Then the mapping $\varphi\colon \R^{m+n}\to \R^{m+n}$ given by
$$
\varphi(x,y)=(x-\widetilde\psi(y+\widetilde\phi(x)),y+\widetilde\phi(x))
$$
is a homeomorphism such that $\varphi(A\times \{0\})=\{0\}\times B$, its inverse is given by
$$
\varphi^{-1}(z,w)=(z+\widetilde\psi(w), w-\widetilde\phi(z+\widetilde\psi(w))).
$$
and both have a derivative at $0\in \R^{m+n}$.

Since $m(A\times \{0\}) = m(A)= m(X)$ and $m(\{0\}\times B)= m(B)= m(Y)$, by Corollary \ref{gau-lipman_thm}, we obtain $m(X)\equiv m(Y)\, {\rm mod\,} 2$.
\end{proof}
Let us make some remarks on Theorem \ref{main_result}. Firstly, the assumption that $D\varphi_0$ is an isomorphism cannot be removed, as it is shown in the next example.
\begin{example}
Let $X=\{(x,y)\in\R^2 ;\, y^3=x^2\}$ and $Y=\{(x,y)\in\R^2;\, y=0\}$. Then $\varphi\colon (\R^2,X,0)\to (\R^2,Y,0)$ given by $\varphi(x,y)=(x,y^3-x^2)$ is a homeomorphism, which has a derivative at the origin, 
but $D\varphi_0$ is not an isomorphism.
In this case, $m(X)=2$ and $m(Y)=1$.
\end{example}
Secondly, we cannot expect equality (without modulus 2) as is shown in the next example.
\begin{example}
Let $V=\{(x,y,z)\in\R^3;\, z^3=x^5y+xy^5\}$. Then the mapping $\varphi\colon \R^3\to \R^3$ given by $\varphi(x,y,z)=(x,y,z-(x^5y+xy^5)^{\frac{1}{3}})$ is a homeomorphism which has a derivative at the origin and its inverse has also a derivative at the origin. Moreover, $\varphi(V)=\R^2\times \{0\}$, but $m(V)=3$ and $m(\R^2\times \{0\})=1$.
\end{example}

We finish this Section by presenting an example of a mapping which has a derivative at the origin and is a homeomorphism between two analytic sets, but its inverse has not a derivative at the origin.

\begin{example}
The mapping $\varphi\colon \mathbb{R}^2\to \mathbb{R}^2$  given by
$$
\varphi(x,y)=\left\{\begin{array}{ll}
\big(x,y+2y^2\sin{\frac{1}{y}}\big),& y\not=0\\
(x,0),& y=0
\end{array}\right.
$$
has a derivative at the origin, $D\varphi_0=id\colon \R^2\to \R^2$ and $\varphi|_{\R\times \{0\}}\colon \R\times \{0\}\to \R\times \{0\}$ is a homeomorphism, but it does not have an inverse which has a derivative at the origin.
\end{example}

\section{A generalization of Gau-Lipman's Theorem}\label{main_sec_two}
In this Section, we present a complex version of Theorem \ref{main_result}, which is a generalization of Gau-Lipman's Theorem.

\begin{theorem}\label{generalization_gau-lipman_thm}
Let $X, Y \subset  \C^N$ be two complex analytic sets with $0\in X\cap Y$. Assume that there exists a mapping $\varphi\colon (\C^N,0)\to (\C^N,0)$ such that $\varphi|_X\colon (X,0)\to (Y,0)$ is a homeomorphism. If $\varphi$ has a derivative at the origin (as a mapping from $(\R^{2N},0)$ to $(\R^{2N},0)$) and $D\varphi_0\colon \R^{2N}\to \R^{2N}$ is an isomorphism, then $m(X,0)= m(Y,0)$.
\end{theorem}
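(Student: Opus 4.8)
The plan is to follow the same strategy as in the proof of Theorem \ref{main_result}, replacing the reduction modulo $2$ by the exact relative-multiplicity formula of Chirka (Remark \ref{multip}). First I would set $\phi:=D\varphi_0\colon\R^{2N}\to\R^{2N}$, an $\R$-linear isomorphism, and put $A:=\phi(X)$. Since $\phi$ is linear, $A$ is a (pure-dimensional) real analytic set and $C(A,0)=\phi(C(X,0))$. Exactly as in Claim \ref{homeo_bs}, the map $\psi:=\phi\circ(\varphi|_{X})^{-1}\colon Y\to A$ lifts to a homeomorphism $\psi'\colon Y'\to A'$ of strict transforms; moreover, since both $\phi'$ and $\varphi'$ have the same boundary map $\nu(x)=\phi(x)/\|\phi(x)\|$, the boundary map of $\psi'=\phi'\circ(\varphi^{-1})'$ is the identity. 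Hence $C(Y,0)=C(A,0)=\phi(C(X,0))$, ${\rm Smp}(\partial Y')={\rm Smp}(\partial A')$, and $k_Y(p)=k_A(p)$ for every simple point $p$, as in Claim \ref{preserves_odd_cone}.

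Next I would pass to the tangent cones, which here are complex algebraic cones. Write $C(X,0)=\bigcup_j X_j$ and $C(Y,0)=\bigcup_i Y_i$ for the decompositions into irreducible components. The identity $\phi(C(X,0))=C(Y,0)$, together with the fact that $\phi$ is a homeomorphism, forces $\phi$ to permute the irreducible components: there is a bijection $\sigma$ with $\phi(X_j)=Y_{\sigma(j)}$, each $Y_{\sigma(j)}$ being a complex cone even though $\phi$ itself is only $\R$-linear. Using the homeomorphism $\phi'\colon X'\to A'$ (which preserves $k$) and the correspondence between cone components and connected components of ${\rm Smp}(\partial\,\cdot\,')$ recorded in the remark preceding Remark \ref{multip}, one obtains $k_X(X_j)=k_A(\phi(X_j))=k_Y(Y_{\sigma(j)})$. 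By Chirka's formula (Remark \ref{multip}),
$$
m(X,0)=\sum_j k_X(X_j)\,m(X_j,0),\qquad m(Y,0)=\sum_j k_Y(Y_{\sigma(j)})\,m(Y_{\sigma(j)},0).
$$
Since the weights already match, it remains only to prove $m(X_j,0)=m(Y_{\sigma(j)},0)$ for each $j$. Note that here one cannot simply collapse the cone to its reduced set, because the weights $k_X(X_j)$ may exceed $1$ (as already happens for a cusp), so the decomposition into components is essential.

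For the equality of the degrees $m(X_j,0)$ I would invoke the original Gau-Lipman Theorem \ref{complex_gau-lipman_thm} component by component. For each $j$, the $\R$-linear isomorphism $\phi$ of $\C^N$ is a homeomorphism $(\C^N,X_j,0)\to(\C^N,Y_{\sigma(j)},0)$ which, being linear, is smooth with $D\phi_0=\phi$ and $D(\phi^{-1})_0=\phi^{-1}$ both isomorphisms. Thus the hypotheses of Theorem \ref{complex_gau-lipman_thm} are satisfied and $m(X_j,0)=m(Y_{\sigma(j)},0)$. Substituting into the two displayed sums yields $m(X,0)=m(Y,0)$, as desired.

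I expect the main obstacle to be the bookkeeping in the middle step: verifying that the $\R$-linear map $\phi$ really carries each complex irreducible component $X_j$ onto a complex irreducible component $Y_{\sigma(j)}$, and that the topological weights $k$ transfer correctly along the chain relating $X'$ to $A'$ via $\phi'$ and $Y'$ to $A'$ via $\psi'$. The conceptual key that unlocks the exact (and not merely ${\rm mod}\,2$) statement is the observation that, although $\phi$ is not complex linear, the existence of the homeomorphism $\varphi$ forces $\phi(C(X,0))$ to coincide with the complex cone $C(Y,0)$; this is exactly what allows the original Gau-Lipman Theorem to be applied on each component, whereas in the real setting (Theorem \ref{main_result}) only the parity of the fibre count survives and one is restricted to the odd part of the cone.
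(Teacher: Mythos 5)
Your proposal is correct and shares the paper's overall skeleton (correspondence of irreducible components of the tangent cones, preservation of the numbers $k$ via the lifted homeomorphism of strict transforms, and Chirka's formula from Remark \ref{multip}), but it diverges from the paper at the decisive step: the equality $m(X_j,0)=m(Y_{\sigma(j)},0)$ for the cone components. You obtain it by applying the original Gau--Lipman Theorem \ref{complex_gau-lipman_thm} componentwise to the linear homeomorphism $\phi=D\varphi_0$; the paper instead proves this equality from scratch, by complexifying the $\R$-linear map $\phi$ to a $\C$-linear isomorphism $\phi_{\C}\colon\C^{2N}\to\C^{2N}$ with $\phi_{\C}(X_{j\C})=Y_{j\C}$, invoking Ephraim's product-structure theorem \cite{Ephraim:1976b} to identify $X_{j\C}$ with $X_j\times c_N(X_j)$ up to complex analytic diffeomorphism, so that $m(X_{j\C},0)=m(X_j,0)^2$ and likewise for $Y_j$, and then concluding $m(X_j,0)^2=m(Y_j,0)^2$ from the invariance of multiplicity under complex analytic isomorphisms \cite{Chirka:1989}. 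Both routes are sound: yours is not circular, since Gau--Lipman's theorem is established independently in \cite{Gau-Lipman:1983}, and in fact you only need its linear (hence $C^1$) case, i.e.\ the Ephraim--Trotman result \cite{Ephraim:1976,Trotman:1977}, because $\phi$ is linear. What the paper's route buys is self-containedness: its argument never passes through Gau--Lipman's proof, so Theorem \ref{generalization_gau-lipman_thm} simultaneously furnishes a new proof of Gau--Lipman's theorem as a special case, whereas your route reduces the generalization to the original theorem and forfeits that. Two smaller remarks: your claim that homeomorphy together with $\phi(C(X,0))=C(Y,0)$ by itself forces $\phi$ to permute the irreducible components is looser than what is needed---this is exactly the content of Lemma A.8 of \cite{Gau-Lipman:1983} or Proposition 2 of \cite{Sampaio:2019}, which exploit the linearity of $\phi$ (smooth points of a complex cone are complex-regular points, and a linear isomorphism preserves them), so you should cite one of these rather than argue topologically; you do flag this as the step to verify. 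Also, your detour through $A=\phi(X)$ and $\psi'=\phi'\circ(\varphi^{-1})'$ mimics the proof of Theorem \ref{main_result}, while the paper's proof of Theorem \ref{generalization_gau-lipman_thm} works directly with the homeomorphism $\varphi'\colon X'\to Y'$; this difference is purely cosmetic.
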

\begin{proof}
By using that $\phi:=D\varphi_0\colon\R^{2N}\to\R^{2N}$ is an $\R$-linear isomorphism, we obtain that $\phi$ maps bijectively the irreducible components of $C(X,0)$ over the irreducible components of $C(Y,0)$ (see Lemma A.8 in \cite{Gau-Lipman:1983} or Proposition 2 in \cite{Sampaio:2019}) and the mapping $\varphi': X'\to Y'$ given by
$$
\varphi'(x,t)=\left\{\begin{array}{ll}
\left(\frac{\varphi(tx)}{\|\varphi(tx)\|},\|\varphi(tx)\|\right),& t\not=0\\
\left(\frac{\phi(x)}{\|\phi(x)\|},0\right),& t=0,
\end{array}\right.
$$
is a homeomorphism. 
Let $X_1,...,X_r$ and $Y_1,...,Y_r$ be the irreducible components of $C(X,0)$ and $C(Y,0)$, respectively, such that $Y_j=\phi(X_j)$, $j=1,...,r$. Thus, by proceeding like in the proof of Claim \ref{preserves_odd_cone}, we obtain $k_X(X_j)=k_Y(Y_j)$, for all $j=1,...,r$.

Fixing $j\in \{1,...,r\}$, and by looking at $X_j$ and $Y_j$ as real algebraic sets in $\R^{2N}\cong\C^N$ and, as $\phi$ is an $\R$-linear isomorphism, then its complexification, $\phi_{\C}\colon\C^{2N}\to\C^{2N}$, is a $\C$-linear isomorphism such that $\phi_{\C}(X_{j\C})=Y_{j\C}$. By Proposition 2.9 in \cite{Ephraim:1976b}, $X_{j\C}$ (resp. $Y_{j\C}$)  is complex analytic diffeomorphic to $X_j\times c_N(X_j)$ (resp. $Y_j\times c_N(Y_j)$), where
$c_N:\C^N\to \C^N$ is the conjugation mapping given by $c_N(z_1,...,z_N)=(\overline{z}_1,...,\overline{z}_N)$. 
Then, 
$$m(X_{j\C},0)= m(X_j\times c_N(X_j),0)=m(Y_j\times c_N(Y_j),0)=m(Y_{j\C},0),$$ 
since the multiplicity is invariant by complex analytic diffeomorphisms (see \cite[Section 11, p. 120, Proposition]{Chirka:1989}). However, $c_N(X_j)$  and $c_N(Y_j)$ are complex analytic sets satisfying $m(c_N(X_j),0)=m(X_j,0)$ and $m(c_N(Y_j),0)=m(Y_j,0)$, then we obtain $m(X_j\times c_N(X_j),0)=m(X_j,0)^2$ and $m(Y_j\times c_N(Y_j),0)=m(Y_j,0)^2$, so we obtain $m(X_j,0)=m(Y_j,0)$, for all $j\in\{1,...,r\}$. 

By Remark \ref{multip}, 
$$m(X,0)=\sum_{j=1}^r k_X(X_j)\cdot m(X_j,0)$$
and 
$$m(Y,0)=\sum_{j=1}^r k_Y(Y_j)\cdot m(Y_j,0).$$

Therefore, $m(X,0)=m(Y,0)$.
\end{proof}

It is clear that as a consequence of Theorem \ref{generalization_gau-lipman_thm}, we obtain Gau-Lipman's Theorem. The next example shows that Theorem \ref{generalization_gau-lipman_thm} is really a generalization of Gau-Lipman's Theorem.

\begin{example}\label{weaker_hyp_gau_lipman}
Let $X=\{(x,y)\in\C^2;\, y^4-2x^3y^2-4x^5y+x^6-x^7=0\}$ and $\widetilde{X}=\{(x,y)\in\C^2;\, y^4-2x^3y^2-4x^6y+x^6-x^9=0\}$. The mapping $\Phi\colon (\C,0)\to (X,0)$ given by $\Phi(t)= (t^4,t^6+t^7)$ is a Puiseux parametrization of $X$ and there exists a complex analytic function $\phi\colon (\C,0)\to (\C,0)$ such that $ord_0(\phi)>9$ and the mapping $\tilde\Phi\colon (\C,0)\to (\widetilde{X},0)$ given by $\widetilde{\Phi}(t)= (t^4,t^6+t^9+\phi(t))$ is a Puiseux parametrization of $\widetilde{X}$.
Let $f\colon \mathbb{R}\to \mathbb{R}$ be the function given by
$$
f(s)=\left\{\begin{array}{ll}
s+2s^2\sin{\frac{1}{s}},& s\not=0\\
0,& s=0
\end{array}\right.
$$
and $\varphi\colon (\mathbb{C}^2,0)\to (\mathbb{C}^2,0)$ be the mapping given by
$$
\varphi(x,y)=\left\{\begin{array}{ll}
\widetilde{\Phi}(t),& \mbox{ if } (x,y)=\Phi(t)\mbox{ for some }t\in \C\\
\big(x,f\big(\frac{y+\overline{y}}{2}\big)+if\big(\frac{y-\overline{y}}{2}\big)\big),&  \mbox{ if } (x,y)\not=\Phi(t)\mbox{ for any }t\in \C.
\end{array}\right.
$$
Thus, $\varphi$
has a derivative at the origin, $D\varphi_0=id\colon \R^4\to \R^4$ and $\varphi|_{X}\colon (X,0)\to (\widetilde{X},0)$ is a homeomorphism. Moreover, since $X$ and $\widetilde{X}$ have different Puiseux pairs, there is no homeomorphism $h\colon (\mathbb{C}^2,0)\to (\mathbb{C}^2,0)$ such that $h(X)=\widetilde{X}$.
\end{example}

\noindent {\bf Acknowledgements.}
The author would like to thank the anonymous referees for their useful comments.


\begin{thebibliography}{99}

\bibitem{Comte:2000}
{Comte, G.}
{\it \'Equisingularit\'e r\'eelle: nombres de Lelong et images polaires}.
Annales Scientifiques de l'\'Ecole Normale Sup\'erieure, vol. 33 (2000), no. 6, 757--788.

\bibitem{Chirka:1989} 
{Chirka, E.M.}
\newblock {\it Complex analytic sets}. 
\newblock Translated from the Russian by R. A. M. Hoksbergen. Mathematics and its Applications (Soviet Series), vol. 46. Kluwer Academic Publishers Group, Dordrecht, 1989.


\bibitem{Ephraim:1976} 
{Ephraim, R.}
\newblock {\it $C^1$ preservation of multiplicity}. 
\newblock Duke Math., vol. 43 (1976), 797--803.


\bibitem{Ephraim:1976b}
{Ephraim, R.}
{\it The cartesian product structure and {C}$^\infty$ equivalences of singularities}.
Trans. Amer. Math. Soc., vol. 224 (1976), no. 2, 299--311.


\bibitem{FukuiKP:2004}
{Fukui, Toshizumi; Kurdyka, Krzysztof and Paunescu, Laurentiu}.
{\em  An inverse mapping theorem for arc-analytic homeomorphisms.} 
Geometric singularity theory, pp. 49--56, Banach Center Publ., 65, Polish Acad. Sci. Inst. Math., Warsaw, 2004.

\bibitem{Gau-Lipman:1983}
{Gau, Y.-N. and Lipman, J.}
\newblock {\it Differential invariance of multiplicity on analytic varieties}.
\newblock Inventiones mathematicae, vol. 73 (1983), no. 2, 165--188.

\bibitem{Kurdyka:1989} 
{Kurdyka, K. and Raby, G.}
\newblock {\it Densit\'e des ensembles sous-analytiques}. 
\newblock Ann. Inst. Fourier (Grenoble), vol. 39 (1989), no. 3, 753--771.

\bibitem{Narasimhan:1966} 
{Narasimhan, R.} 
\newblock {\it Introduction to the Theory of Analytic Spaces}. 
\newblock Lecture Notes in Mathematics, vol. 25. Springer, Berlin, Heidelberg, 1966.

\bibitem{Pawlucki:1985}
{Paw\l ucki, W.}
{\it Quasi-regular boundary and Stokes' formula for a sub-analytic leaf}.
In: \L awrynowicz J. (eds) Seminar on Deformations, pp. 235--252. Lecture Notes in Mathematics, vol. 1165. Springer, Berlin, Heidelberg, 1985.

\bibitem{Risler:2001} 
{Risler, J.-J.}
{\it Invariant Curves and Topological Invariants for Real Plane Analytic Vector Fields}.
Journal of Differential Equations, vol. 172 (2001), 212--226.

\bibitem{Sampaio:2019}
{Sampaio, J. E.}
\newblock {\em A proof of the differentiable invariance of the multiplicity using spherical blowing-up}.
Revista de la Real Academia de Ciencias Exactas, F\'isicas y Naturales. Serie A. Matem\'aticas, vol. 113 (2019), 3913--3920.

\bibitem{Sampaio:2020}
{Sampaio, J. Edson}.
\newblock {\it Multiplicity, regularity and Lipschitz Geometry of real analytic hypersurfaces}.
\newblock Preprint (2020). 

\bibitem{Trotman:1977} 
{Trotman, D.}
\newblock {\it Multiplicity is a $C^1$ invariant}. 
\newblock University Paris 11 (Orsay), Preprint (1977).

\bibitem{Trotman:1998} 
{Trotman, D.}
\newblock {\it Multiplicity as a $C^1$ invariant}.
In: Real analytic and algebraic singularities (Nagoya/Sapporo/Hachioji, 1996). Pitman Research Notes in Mathematics Series, vol. 381, pp. 215--
221. Longman, Harlow. 1998.

\bibitem{Valette:2010} 
{Valette, Guillaume.}
\newblock {\it Multiplicity mod 2 as a metric invariant}.  
\newblock Discrete Comput. Geom., vol. 43 (2010), 663--679.
%

\bibitem{Whitney:1972} 
{Whitney, H.}
{\it Complex Analytic Varieties}.
Addison-Wesley publishing company, Mass.-Menlo Park, Calif.-London-Don Mills, Ont, 1972.

\bibitem{Zariski:1971} 
{Zariski, O.}
\newblock {\it Some open questions in the theory of singularities}. 
\newblock Bull. of the Amer. Math. Soc., vol. 77 (1971), no. 4, 481--491.
\end{thebibliography}
\end{document}